\documentclass[lettersize,journal]{IEEEtran}

\usepackage[utf8]{inputenc} 
\usepackage[T1]{fontenc}    
\usepackage{url}            
\usepackage{amsfonts}       
\usepackage{amsmath} 
\usepackage{amssymb}  
\usepackage{amsthm}
\usepackage{mathrsfs}
\usepackage[ruled]{algorithm2e}
\usepackage{bm}
\usepackage{stfloats}
\usepackage{graphicx}
\usepackage{nicefrac}       
\usepackage{microtype}      
\usepackage[normalem]{ulem}

\newcommand{\mA}{\mathcal{A}}

\newcommand{\mC}{\mathcal{C}}

\newcommand{\mE}{\mathcal{E}}
\newcommand{\mQ}{\mathcal{Q}}
\newcommand{\mS}{\mathcal{S}}

\newcommand{\R}{\mathbb{R}}
\newcommand{\mbE}{\mathbb{E}}
\newcommand{\mbP}{\mathbb{P}}

\newcommand{\mO}{\mathcal{O}}
\newcommand{\tmO}{\tilde{\mathcal{O}}}
\newcommand{\tl}{\tilde{l}}
\newcommand{\tx}{\tilde{x}}

\newcommand{\tv}{\tilde{v}}
\newcommand{\dd}{\mathrm{d}}
\newcommand{\um}{\overline{m}}

\newcommand{\dX}{\mathrm{d}X}

\newcommand{\ds}{\mathrm{d}s}
\newcommand{\dt}{\mathrm{d}t}
\newcommand{\dtau}{\mathrm{d}\tau}
\newcommand{\dW}{\mathrm{d}W}

\newcommand{\tX}{\tilde{X}}

\newcommand{\real}{\mathbb{R}}

\newcommand{\amgf}[1]{\Phi(#1)}

\newcommand{\defeq}{:=}
\newcommand{\tr}[1]{\text{tr}(#1)}

\newcommand{\innerp}[1]{\left< #1 \right>}
\newcommand{\expect}[1]{\mathbb{E}\left( #1 \right)}
\newcommand{\expectw}[2]{\mathbb{E}_{#1}\left( #2 \right)}
\newcommand{\prob}[1]{\mathbb{P}\left( #1 \right)}

\renewcommand{\top}{\mathsf{T}}

\newtheorem{definition}{Definition}
\newtheorem{lemma}{Lemma}
\newtheorem{thm}{Theorem}

\newtheorem{assumption}{Assumption}

\newtheorem{problem}{Problem}

\begin{document}

\title{Concentration of Stochastic System Trajectories with Time-varying Contraction Conditions}

\author{Zishun Liu$^{1}$, Liqian Ma$^{1}$, Hongzhe Yu$^{2}$ and Yongxin Chen$^{1}$
\thanks{1: The authors are with Georgia Institute of Technology, Atlanta, GA 30332 
        {\tt\small \{zliu910\}\{mlq\}\{yongchen\}@gatech.edu}}%
\thanks{2: Hongzhe Yu is with PlusAI, Santa Clara, CA 95054 
        {\tt\small hyu419@gatech.edu}}
}

\maketitle

\begin{abstract}
We establish two concentration inequalities for nonlinear stochastic system under time-varying contraction conditions. The key to our approach is an energy function termed Averaged Moment Generating Function (AMGF). By combining it with incremental stability analysis, we develop a concentration inequality that bounds the deviation between the stochastic system state and its deterministic counterpart. As this inequality is restricted to single time instance, we further combine AMGF with martingale-based methods to derive a concentration inequality that bounds the fluctuation of the entire stochastic trajectory. Additionally, by synthesizing the two results, we significantly improve the trajectory-level concentration inequality for strongly contractive systems. Given the probability level $1-\delta$, the derived inequalities ensure an $\mO(\sqrt{\log(1/\delta))}$ bound on the deviation of stochastic trajectories, which is tight under our assumptions. Our results are exemplified through a case study on stochastic safe control.
\end{abstract}

\begin{IEEEkeywords}
Stochastic Nonlinear System, Contraction Theory, Concentration Inequality, Safety Verification
\end{IEEEkeywords}

\newcommand{\upsig}{\overline{\sigma}}
\newcommand{\upr}{\overline{r}}
\section{Introduction}

Safety is a fundamental requirement for various systems including autonomous vehicles, robots, power grids and more. In many scenarios, such systems are modeled as Itô stochastic systems to account for inherent stochastic disturbances. Within this domain, \textit{concentration analysis} characterizes high-probability bound on the deviation of stochastic trajectories from their deterministic counterparts \cite{chen2025concentration}. Since the deterministic trajectory behaviors are well-studied \cite{XC-SS:22,ames2019control}, concentration analysis has become an effective strategy for ensuring stochastic system safety \cite{akella2025risk}.

Unlike linear stochastic systems with Gaussian state distributions \cite{sarkka2019applied}, capturing the deviation distribution for nonlinear systems can be intractable. Consequently, concentration analysis on nonlinear systems typically leverages the evolution of specific distributional properties, where contraction theory serves as a rigorous theoretical foundation\cite{dani2014observer,tsukamoto2021contraction} 
Contraction theory was established for analyzing the behavior of system trajectories with respect to each other\cite{jouffroy2005some}. Within this framework, contraction metric is used to measure the distance between two trajectories, and contraction rate is to quantify the evolution of this distance. 
For nonlinear control systems such as manipulator, quadrotor and vehicles, state-feedback controllers such as TV-LQR and incrementally stabilizing controllers \cite{zamani2013controller,manchester2017control} can ensure the closed-loop system with desired time-varying contraction metics and rates over extensive regions. Accordingly, this paper considers the nonlinear stochastic system under time-varying contraction conditions. 

In the past, the concentration of stochastic contracting system trajectories is typically analyzed through incremental stability analysis (ISA) \cite{pham2009contraction,tsukamoto2020robust}, where a high-probability bound on the stochastic state deviation relies on its expectation bound derived from ISA. However, such approaches often yield overly conservative bounds when the probability level is high (e.g., >99.9\%) due to technical limitations. Moreover, this framework only only provides \textit{pointwise} bounds at single time instance, falling to characterize the concentration of the \textit{entire trajectory}, without resorting to time discretization \cite{wei2025conformal}. Recently, we fundamentally improved the tightness of the ISA-based bound in \cite{szy2024TAC}, and further developed trajectory-level concentration inequalities in \cite{liu2025safety}. However, these results remain restricted to systems with time-invariant contraction conditions. 

In this paper, we investigate the concentration behavior of stochastic system trajectories under time-varying contraction conditions, which generalizes the main results in \cite{szy2024TAC,liu2025safety}. Compared to traditional ISA, the core of our theoretical analysis is a novel function termed Averaged Moment Generating Function (AMGF) developed in \cite{szy2024TAC}. For the stochastic system state, we integrate the AMGF with path-length integral techniques to obtain tight probabilistic bounds on the deviation from its deterministic counterparts. For the concentration of the entire stochastic trajectory, we exploit AMGF properties within a martingale-based framework to tightly bound stochastic trajectory fluctuations. Furthermore, for strongly contracting systems, we synthesize both approaches to significantly sharpen the trajectory-level concentration inequality. Given the probability level $1-\delta$, the derived bounds exhibit only $\mO(\log(1/\delta))$ dependence, ensuring the effectiveness in safety-critical control systems. The proposed theoretical results are validated via a case study of stochastic safe control.

\textit{Notations}: We use $\mathbb{R}_{\geq0}$ to denote the set of non-negative real numbers, $\R^{n\times n}_{SPD}$ to denote the set of all the SPD matrices on $\R^{n\times n}$, and $\mS^{n-1}$ to denote the unit sphere: $\{x\in\R^n: \|x\|=1\}$. We use $\|\cdot\|$ to denote $\ell_2$ norm, $\|\cdot\|_M$ to denote the weighted $\ell_2$ norm with $M\in\R^{n\times n}_{SPD}$, and $\innerp{\cdot,\cdot}$ to denote the standard inner product. We use $\mbE$ to denote expectation, $\mbP$ to denote probability.  For $x\in\R$,  $\lceil x\rceil=\min_{a}\{a\in\mathbb{N}:a\geq x\}$.

\section{Problem Formulation and Preliminaries}
\label{sec: problem formulation} 
In this section, we begin with the system configuration, based on which we introduce the contraction theory and formulate the problems we seek to solve.

Consider the following continuous-time stochastic system
\begin{equation}
    \mathrm{d}X_t = f(X_t, u_t)\,\mathrm{d}t + g_t(X_t)\,\mathrm{d}W_t
    \label{eq: stochastic_system}
\end{equation}
where $X_t\in \real^n$ is the state at time $t$, $u_t \in \mathcal{U}\subset \real^p$ is an open-loop \footnote{We suppose that any state-feedback controller, if exists, has been incorporated into $f(X,\cdot)$.} bounded input at time $t$, $f(X_t, u_t, t)\,\mathrm{d}t$ is the drift term with $f: \real^n\times\real^p\times\real_{\geq0}\to\real^n$, $g_t(X_t)\,\mathrm{d}W_t$ is the diffusion term, and $W_t \in \real^m$ is a $m$-dimensional Wiener process. We impose standard Lipschitz continuity and linear growth conditions \cite[Theorem 5.2.1]{BO:13} to guarantee the existence of a solution to \eqref{eq: stochastic_system}. At the same time, we assume that the diffusion term $g_t$ is uniformly bounded.

\begin{assumption} \label{as: sigma}
    $\exists ~\sigma>0$ such that $g_t(X_t)g_t(X_t)^\top\preceq \sigma^2I$.
\end{assumption}

Intuitively, a stochastic trajectory $X_t$ of \eqref{eq: stochastic_system} exhibits stochastic fluctuation driven by random noises, but tends to \textit{concentrate} around its deterministic counterpart. To formalize this, consider the deterministic system
\begin{equation}\label{eq: deterministic_system}
  \textstyle  \dot{x}_{t} = f(x_t, u_t),
\end{equation}
which represents the noise-free realization of \eqref{eq: stochastic_system}. A deterministic trajectory $x_t$ of \eqref{eq: deterministic_system} and a stochastic trajectory $X_t$ of \eqref{eq: stochastic_system} are defined as \textit{associated} trajectories if they have the same initial state $x_0=X_0$ and the same input $u_t$. Under this convention, the concentration of $X_t$ can be measured by its deviation from the associated $x_t$. Contraction theory provides an effective analytical framework for investigating such deviations.

Given the deterministic system \eqref{eq: deterministic_system}, its contracting property is defined as follows.
\begin{definition} [Contracting System, \cite{tsukamoto2021contraction}] \label{def: contraction}
    The deterministic system \eqref{eq: deterministic_system} is said to be $c_t$-\textit{contracting} if for any trajectories $x_t$ of \eqref{eq: deterministic_system}, $\exists c_t\in\R$ and $M_t\in\R^{n\times n}_{SPD}$ such that:
        \begin{equation} \label{eq: contracting M}
    (\frac{\partial f(x_t,u_t)}{\partial x_t})^\top M_t + M_t\frac{\partial f(x_t,u_t)}{\partial x_t} + \dot{M_t} \preceq 2c_tM_t
\end{equation}
holds at time $t$, where $c_t$ is called as \textit{contraction rate} and $M_t$ is called as \textit{contraction metric}. Especially, the system is said to be strongly contracting if $c_t<0$ holds for any $t$.
\end{definition}

As discussed in the Introduction, the system dynamics $f$, integrated with appropriate state-feedback controllers, can have time-varying contraction properties. In this paper, we impose this assumption on the systems, which generalizes the stationary contraction assumptions in our previous work \cite{szy2024TAC,liu2025safety}.

\begin{assumption} \label{as: M}
    Given the terminal time $T$, there exist $c_t:[0,T]\to\R$ and $M_t:[0,T]\to\R^{n\times n}_{SPD}$ such that the system \eqref{eq: deterministic_system} is $c_t$-contracting with contraction metric $M_t$.
\end{assumption}

For associated trajectories $X_t$ of \eqref{eq: stochastic_system} and $x_t$ of \eqref{eq: deterministic_system} under Assumption \ref{as: M}, $\|X_t-x_t\|_{M_t}$ serves as a natural metric for the deviation between them. At any given time point $t$, the concentration behavior of the stochastic state $X_t$ can be quantified by the high-probability bound on $\|X_t-x_t\|_{M_t}$, which is formulated as the following problem:
\begin{problem} \label{problem: single bound}
    For associated trajectories $X_t$ of \eqref{eq: stochastic_system} and $x_t$ of \eqref{eq: deterministic_system} under Assumptions \ref{as: sigma}-\ref{as: M}, given a time point $t$ and a probability level $\delta\in(0,1)$, determine a tight bound $r_{\delta,t}: (0,1)\times \R_{\geq0}\to \R_{\geq0}$ such that $\prob{\|X_t-x_t\|_{M_t}\leq r_{\delta,t}}\geq 1-\delta$.
\end{problem}

The main challenge of Problem \ref{problem: single bound} is in the tightness of  $r_{\delta,t}$ under the time-varying contraction condition. Although a valid probabilistic bound for this problem can be derived via standard ISA\cite{pham2009contraction}, but it only has an $\mathcal{\mO}(\sqrt{1/\delta})$ dependence on $\delta$, which is overly conservative for safety-critical systems where $\delta$ is usually $\leq10^{-3}$. 

Moreover, Problem \ref{problem: single bound} is restricted to the \textit{single-time instance}, but not \textit{over the entire stochastic trajectory}. To fully capture the concentration behavior of the stochastic trajectory $X_t$, $t\in[0,T]$, it is essential to derive a \textit{tight tube} around its associated deterministic trajectory $x_t$ that probabilistically envelopes their deviation, as formulated below.
\begin{problem} \label{problem: traj-level}
    For associated trajectories $X_t$ of \eqref{eq: stochastic_system} and $x_t$ of \eqref{eq: deterministic_system} under Assumptions \ref{as: sigma}-\ref{as: M}, given a period $[0,T]$ and a probability level $\delta\in(0,1)$, determine a tight $\upr_{\delta,t}: [0,T]\to \R_{\geq0}$ such that $\prob{\|X_t-x_t\|_{M_t}\leq \upr_{\delta,t},~\forall t\leq T}\geq 1-\delta$.
\end{problem}

The difference between Problem \ref{problem: single bound} and \ref{problem: traj-level} is illustrated in Fig. \ref{fig: single t vs traj}. Although our previous work \cite{liu2025safety} derives a tight $\upr_{\delta,t}$ under stationary contraction conditions, it remains unclear whether it can be generalized to the scenarios under Assumption \ref{as: M}.

\begin{figure}[t]
 \centering
\includegraphics[width =0.49\linewidth]{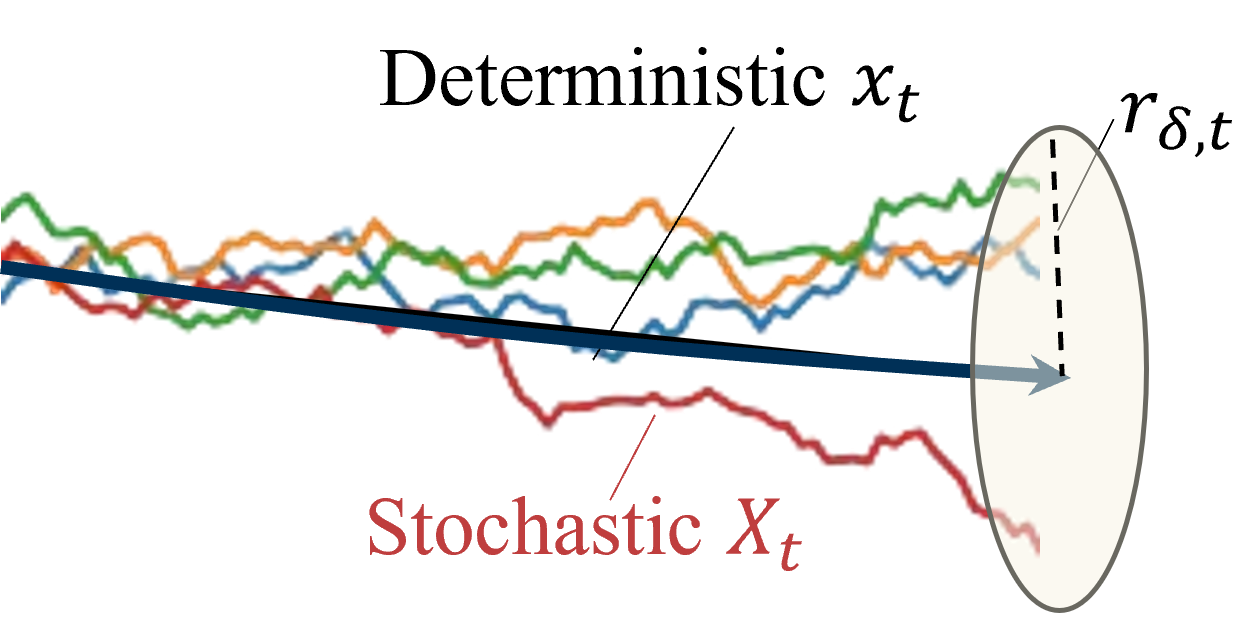}
\includegraphics[width =0.49\linewidth]{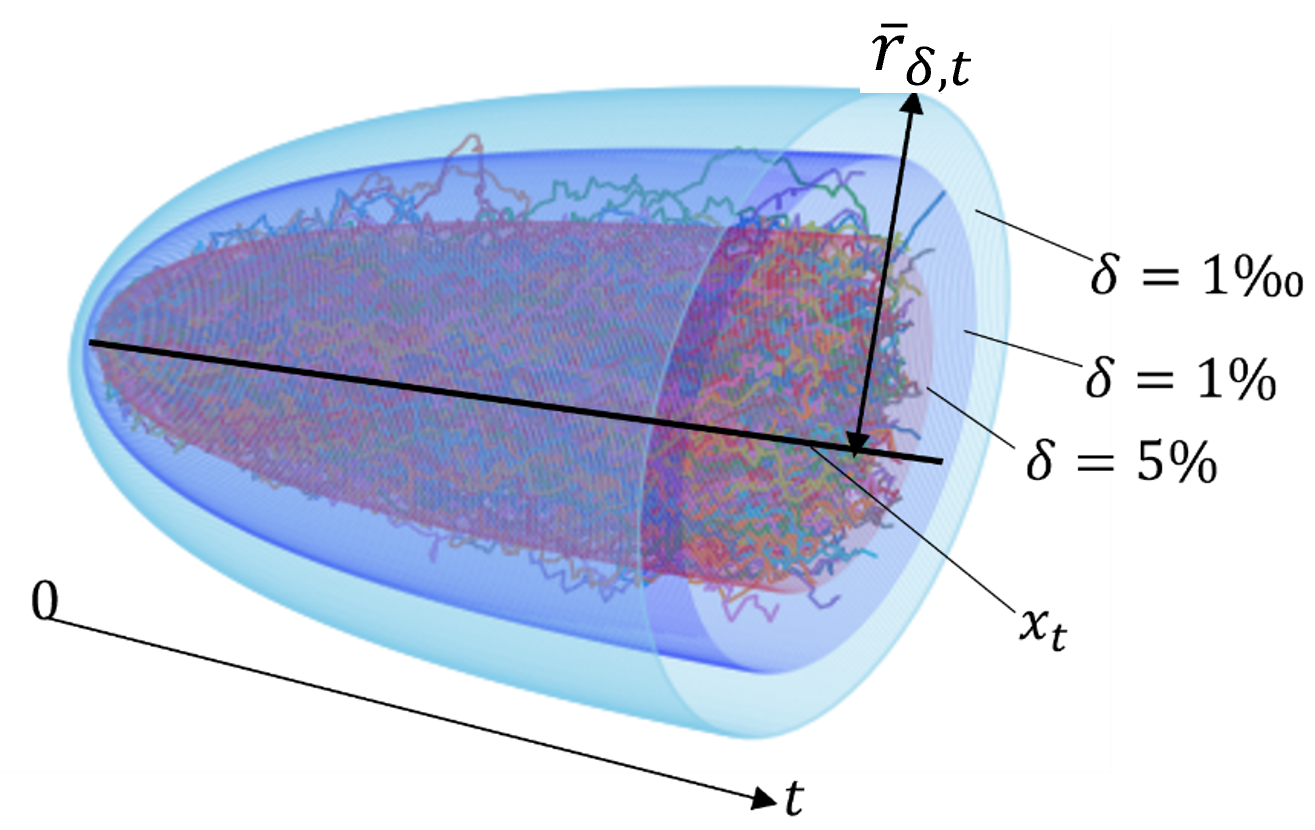}
  \caption{Comparison of the single-time bound $r_{\delta}$ (Left) and the trajectory-level bound $\upr_{\delta,t}$ (Right).
  }
	\label{fig: single t vs traj}
 \end{figure}

\section{Concentration of System State at Single Time}
In this section, we show the concentration of the stochastic system state $X_t$ at a single time $t$ by proposing a tight bound $r_{\delta,t}$ that solves Problem \ref{problem: single bound}. 

As noted in \cite{szy2024TAC}, the conservativeness of ISA stems from bounding the quadratic expectation $\mbE(\|X_t-x_t\|_{M_t}^2)$, which only ensures an $\mO(\sqrt{1/\delta})$ probabilistic bound for $\|X_t-x_t\|$ by Markov's Inequality. To circumvent this limitation, we leveraged a novel mathematical tool named \textit{Averaged Moment Generating Function} (AMGF) in \cite{szy2024TAC}, which improves the bound to $\mO(\sqrt{\log1/\delta})$ under stationary contraction metrics. The definition of AMGF and its weighted version are as follows:
\begin{definition} \label{def: AMGF}
    Given $X\in\R^n$, the averaged moment generating function (AMGF) is defined as 
    \begin{equation}
        \textstyle \mbE_X\Phi(X)=\mbE_X\mathbb{E}_{\ell\in\mS^{n-1}}e^{\lambda\innerp{\ell,X}},
    \end{equation}
    where $\Phi(X)=\mathbb{E}_{\ell\in\mS^{n-1}}e^{\lambda\innerp{\ell,X}}$ is defined as the energy function of AMGF. Moreover, given $M\in\R^{n\times n}_{SPD}$, we define the weighted version of $\Phi(X)$ as $\Phi_M(X)=\Phi(M^{1/2}X)$.
\end{definition}

Many intriguing properties of AMGF have been discovered in our previous works. Based on the properties demonstrated in \cite[Lemma 5.2]{szy2024TAC}. \cite[Lemma 5.4]{szy2024TAC} and \cite[Lemma 4.2]{liu2025safety}, it is straightforward to verify the following statements of $\Phi_M(X)$:
\begin{lemma} \label{lemma: M-AMGF}
    Consider the function $\Phi_M(X)$ in Definition \ref{def: AMGF}, where $X\in\R^n$ and $M\in\R^{n\times n}_{SPD}$, then it holds that: 
    \begin{enumerate}
    \item The value of $\Phi_M(X)$ merely depends on $\|X\|_M$.
    \item $\Phi_{M_1}(X_1)\leq\Phi_{M_2}(X_2)$ if $\|X_1\|_{M_1}\leq \|X_2\|_{M_2}$.
    \item  Given any $\varepsilon\in(0,1)$, $\Phi_M(X)\geq (1-\varepsilon^2)^{\frac{n}{2}}e^{\varepsilon \|\lambda X\|_M}.$
    \item If $X$ is random and $\exists \vartheta>0$ such that for any $\lambda\in\R$, $\mbE_X\left(\Phi_M\right)\leq e^{\frac{\lambda^2\vartheta^2}{2}}$,  
    then for any $\delta,\varepsilon\in(0,1)$:
    \begin{equation}
        \prob{\|x\|_M\leq \vartheta\sqrt{\varepsilon_1n +\varepsilon_2\log(1/\delta)}}\geq 1-\delta,
    \end{equation}
    \begin{equation} \label{eq: epsilon}
        \textstyle \text{where} ~ \varepsilon_1=\frac{\log(\frac{1}{1-\varepsilon^2})}{\varepsilon^2},~ \varepsilon_2=\frac{2}{\varepsilon^2}.
    \end{equation}
\end{enumerate}
\end{lemma}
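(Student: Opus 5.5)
The plan is to reduce every item to the unweighted energy function $\Phi$ through the identity $\Phi_M(X)=\Phi(M^{1/2}X)$ together with $\|M^{1/2}X\|=\|X\|_M$. With this reduction, items 1--4 become statements about $\Phi$ evaluated at the vector $Y=M^{1/2}X$. These are exactly the facts established in \cite[Lemma 5.2]{szy2024TAC}, \cite[Lemma 5.4]{szy2024TAC} and \cite[Lemma 4.2]{liu2025safety}. I will nonetheless sketch each argument directly so the reduction is transparent.

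\textbf{Items 1 and 2.} For item 1, I use the rotation invariance of the uniform measure on $\mS^{n-1}$. For any orthogonal $Q$ we have $\Phi(QY)=\mathbb{E}_{\ell}e^{\lambda\innerp{Q^\top\ell,Y}}=\Phi(Y)$. Every $Y$ can be rotated to $\|Y\|e_1$, so $\Phi(Y)=\phi(\|Y\|)$ with
\[
\phi(s)\defeq\mathbb{E}_{\ell}e^{\lambda s\ell_1}.
\]
Hence $\Phi_M(X)=\phi(\|X\|_M)$. For item 2, it then suffices to show that $\phi$ is nondecreasing on $[0,\infty)$. Since $\ell_1$ is symmetric, $\phi(s)=\mathbb{E}\cosh(\lambda s\ell_1)$, and $\cosh$ is increasing in $|\cdot|$, so $\phi$ is nondecreasing. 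Item 2 follows because both sides are $\phi$ evaluated at $\|X_1\|_{M_1}\leq\|X_2\|_{M_2}$.

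\textbf{Item 3.} Write $s=\|\lambda Y\|$ and, using item 1, take $\lambda Y=s e_1$. I will restrict the expectation to the cap $\{\ell_1\geq\varepsilon\}$, which gives
\[
\phi\geq e^{\varepsilon s}\,\mathbb{P}(\ell_1\geq\varepsilon).
\]
This is too weak as it stands, since the cap probability decays roughly like $(1-\varepsilon^2)^{(n-1)/2}$ but with an extra polynomial factor. The cleaner route, which I expect is the one in \cite{szy2024TAC}, is a Gaussian comparison. Let $\ell=G/\|G\|$ with $G\sim N(0,I)$. Then use Jensen or a change of measure: tilt $G$ by the mean $\varepsilon e_1$ scaled appropriately and compare densities, which yields the factor $(1-\varepsilon^2)^{n/2}$ from the normalization $\int e^{-(1-\varepsilon^2)\|z\|^2/2}$. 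Getting exactly the constant $(1-\varepsilon^2)^{n/2}$ is the main obstacle. If this step fails, I will simply cite \cite[Lemma 5.4]{szy2024TAC} for the unweighted inequality $\Phi(Y)\geq(1-\varepsilon^2)^{n/2}e^{\varepsilon\|\lambda Y\|}$ and substitute $Y=M^{1/2}X$.

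\textbf{Item 4.} I combine item 3 with Markov's inequality. For $\lambda>0$,
\[
\mathbb{P}(\|X\|_M\geq r)\leq \mathbb{P}\bigl(\Phi_M(X)\geq(1-\varepsilon^2)^{n/2}e^{\varepsilon\lambda r}\bigr)\leq (1-\varepsilon^2)^{-n/2}e^{\lambda^2\vartheta^2/2-\varepsilon\lambda r}.
\]
Choosing $\lambda=\varepsilon r/\vartheta^2$ turns the right-hand side into $(1-\varepsilon^2)^{-n/2}e^{-\varepsilon^2r^2/(2\vartheta^2)}$. Setting this equal to $\delta$ and solving gives
\[
r^2=\vartheta^2\Bigl(\tfrac{n\log(1/(1-\varepsilon^2))}{\varepsilon^2}+\tfrac{2}{\varepsilon^2}\log(1/\delta)\Bigr),
\]
which is precisely $\vartheta\sqrt{\varepsilon_1 n+\varepsilon_2\log(1/\delta)}$ with $\varepsilon_1,\varepsilon_2$ as in \eqref{eq: epsilon}.
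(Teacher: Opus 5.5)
Your proposal is correct and follows essentially the same route as the paper, which reduces to the unweighted AMGF via $\Phi_M(X)=\Phi(M^{1/2}X)$ and $\|M^{1/2}X\|=\|X\|_M$ and then invokes \cite[Lemma 5.2, Lemma 5.4]{szy2024TAC} and \cite[Lemma 4.2]{liu2025safety}; your self-contained arguments for items 1, 2 and 4 (rotation invariance, monotonicity via $\cosh$, and Markov's inequality with $\lambda=\varepsilon r/\vartheta^2$) are sound. Your Gaussian-tilt sketch for item 3 is not a completed proof, so that item rests on citing \cite[Lemma 5.4]{szy2024TAC}, which is exactly how the paper justifies it.
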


Equipped with the weighted AMGF, we are ready to analyze the concentration of the stochastic system state $X_t$. Below we establish a tight probabilistic bound on the deviation to its associated deterministic trajectory $x_t$.

\begin{thm} \label{thm: single_bound}
    Consider a trajectory $X_t$ of the stochastic system 
     \eqref{eq: stochastic_system} 
    and its associated nominal trajectory $x_t$ of the system 
    \eqref{eq: deterministic_system}. Suppose that Assumption \ref{as: sigma} and \ref{as: M} hold for \eqref{eq: stochastic_system} and \eqref{eq: deterministic_system}. Define $\um_t$ as the maximal eigenvalue of $M_t$, $\upsig_t=\sqrt{\um_t}\sigma$, $\psi_t=\int_0^t c_\tau \dtau$ and $\Psi_t=\int_0^t \upsig_\tau^2e^{-2\psi_\tau}\dtau$. Then, for any $t\in[0,T]$, $\delta\in(0,1)$ and $\varepsilon\in(0,1)$:
    \begin{equation} \label{eq: single-time bound}
        \|X_t-x_t\|_{M_t}\leq \sqrt{e^{2\psi_t}\Psi_t(\varepsilon_1n+\varepsilon_2\log(1/\delta))}
    \end{equation}
    holds with probability $\geq1-\delta$, where $\varepsilon_1, \varepsilon_2$ are as \eqref{eq: epsilon}.
\end{thm}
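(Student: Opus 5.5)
The plan is to prove an AMGF bound $\mbE\,\Phi_{M_t}(X_t-x_t)\leq e^{\lambda^2\vartheta_t^2/2}$ for every $\lambda\in\R$, with $\vartheta_t^2=e^{2\psi_t}\Psi_t$, and then read off \eqref{eq: single-time bound} from item 4 of Lemma \ref{lemma: M-AMGF}. Working directly with $\Phi_{M_t}(X_t-x_t)$ is awkward because the drift difference $f(X_t,u_t)-f(x_t,u_t)$ is not linear. I would therefore follow the path-length integral technique. Take the straight-line homotopy $\gamma(s)=x_t+s(X_t-x_t)$, $s\in[0,1]$, and write $f(X_t)-f(x_t)=\int_0^1 \frac{\partial f}{\partial x}(\gamma(s))\,\ds\,(X_t-x_t)$. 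This lets the contraction inequality \eqref{eq: contracting M}, which holds at every point of the region by Assumption \ref{as: M}, be applied along the segment.

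The main step is an Itô computation for the process $Y_t=\Phi_{M_t}(X_t-x_t)$, which depends on $\|X_t-x_t\|_{M_t}$ only (item 1). Write $e_t=X_t-x_t$, so $\dd e_t=(f(X_t)-f(x_t))\dt+g_t\dW_t$. Since $\Phi$ is an average of exponentials, I would differentiate under the sphere average. For fixed $\ell$, let $z=\innerp{\ell,M_t^{1/2}e_t}$. The drift of $e^{\lambda z}$ equals $e^{\lambda z}\lambda\innerp{\ell,\tfrac{\dd}{\dt}(M_t^{1/2})e_t+M_t^{1/2}(f(X_t)-f(x_t))}$ plus the Itô term $\tfrac{\lambda^2}{2}e^{\lambda z}\,\ell^\top M_t^{1/2}g_tg_t^\top M_t^{1/2}\ell$. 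By Assumption \ref{as: sigma}, the Itô term is at most $\tfrac{\lambda^2}{2}\um_t\sigma^2 e^{\lambda z}=\tfrac{\lambda^2\upsig_t^2}{2}e^{\lambda z}$, and after averaging over $\ell$ it contributes at most $\tfrac{\lambda^2\upsig_t^2}{2}\Phi_{M_t}(e_t)$.

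For the first-order part, I would use the fact that $\Phi_{M_t}$ is radial in $\|e\|_{M_t}$. After averaging, the first-order drift equals $\Phi'(r)\,\tfrac{\dd}{\dt}r$ along the deterministic flow, where $r=\|e\|_{M_t}$ and $\Phi'\geq0$ by item 2. The path-length argument combined with \eqref{eq: contracting M} then gives $\tfrac{\dd}{\dt}\|e\|^2_{M_t}\leq 2c_t\|e\|_{M_t}^2$ for the drift part, so the first-order contribution is at most $c_t r\Phi'(r)$.

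This step is the main obstacle. The term $c_t r\Phi'(r)$ must be absorbed by a time rescaling, and this requires care when $c_t$ may be positive. I would first try the rescaled variable $\tilde e_t=e^{-\psi_t}e_t$, so that $\Phi_{M_t}(\tilde e_t)$ has nonpositive first-order drift and diffusion coefficient scaled by $e^{-\psi_t}$. This gives
\[
\tfrac{\dd}{\dt}\mbE\,\Phi_{M_t}(\tilde e_t)\leq \tfrac{\lambda^2\upsig_t^2e^{-2\psi_t}}{2}\,\mbE\,\Phi_{M_t}(\tilde e_t).
\]
Grönwall's inequality with $\tilde e_0=0$ and $\Phi(0)=1$ then yields $\mbE\,\Phi_{M_t}(\tilde e_t)\leq e^{\lambda^2\Psi_t/2}$. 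A localization or stopping-time argument justifies dropping the martingale term in expectation.

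Finally, $\Phi_{M_t}(\tilde e_t)$ with parameter $\lambda$ equals $\Phi_{M_t}(e_t)$ with parameter $\lambda e^{-\psi_t}$. Since $\lambda$ is arbitrary, substituting $\lambda\mapsto\lambda e^{\psi_t}$ gives $\mbE\,\Phi_{M_t}(e_t)\leq e^{\lambda^2 e^{2\psi_t}\Psi_t/2}$. Item 4 of Lemma \ref{lemma: M-AMGF} with $\vartheta^2=e^{2\psi_t}\Psi_t$ then delivers \eqref{eq: single-time bound}.
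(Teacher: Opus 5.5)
Your proof is correct. It shares the paper's skeleton: the AMGF energy, the Itô bound $\tfrac{\lambda^2\upsig_t^2}{2}\Phi$ on the second-order term, rescaling by $e^{-\psi_t}$ to reduce to contraction rate zero, a Grönwall step from $\Phi(0)=1$, and item 4 of Lemma \ref{lemma: M-AMGF}.

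The difference is in how you control the drift term. The paper's route is:
\begin{itemize}
\item it works with the path energy $\mathcal{E}(l_t)=\int_0^1\Phi_{M_t}(v_t(s))\,\ds$ of a geodesic $l_t^*$ and pushes that geodesic forward by the virtual dynamics;
\item it gets $S_1\le 0$ by arguing that $S_1$ has the sign of the quadratic-energy derivative $\dot{\mathcal{Q}}$ and citing Singh et al.;
\item it passes to the new geodesic via $\mathcal{E}(l^*_{t+\dt})\le\mathcal{E}(l_{t+\dt})$;
\item only at the end does it use that $M_t$ is state-independent, to identify $\mathcal{E}(l_t^*)=\Phi_{M_t}(X_t-x_t)$.
\end{itemize}

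You use state-independence from the outset. The integral mean-value form of $f(X_t)-f(x_t)$ along the segment, together with radiality of $\Phi$, reduces the drift to $\Phi'(r)\,\dot r$ with $\dot r\le c_t r$. That inequality is just \eqref{eq: contracting M} averaged over the segment.

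What your route buys:
\begin{itemize}
\item It is self-contained, with no geodesic comparison and no external citation.
\item It addresses the stochastic-integral term, which the paper passes over: with $a_\tau=\tfrac{\lambda^2}{2}\upsig_\tau^2e^{-2\psi_\tau}$, the process $e^{-\int_0^t a_\tau\dtau}\Phi_{M_t}(\tilde e_t)$ is a nonnegative local supermartingale, hence a supermartingale.
\item Your substitution $\lambda\mapsto\lambda e^{\psi_t}$ is equivalent to the paper's rescaling of the final bound.
\end{itemize}

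What the paper's framing buys: the geodesic formulation is what one would need for state-dependent metrics $M(x,t)$, where the straight segment is no longer a geodesic. In the present setting, though, your computation gives the same generator inequality as \eqref{eq: E(l) martingale}. It therefore serves equally well as the affine-martingale input for Theorem \ref{thm: traj_bound}.
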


\begin{proof}
 We adopt the path-length integral technique \cite{dani2014observer} to prove this theorem. Given the associated $x_t$ and $X_t$ at time $t$, and a smooth path $l_t(s):[0,1]\to\R^n$ connecting $x_t$ and $X_t$ (i.e., $l_t(0)=x_t$ and $l_t(1)=X_t$), define $v_t(s)=\frac{\partial l_t(s)}{\partial s}$ as the tangent vector at $l_t(s)$, and $V(s,t)=\Phi_{M_{t}}(v_t(s))$ as the energy density at $l_t(s)$. The total energy along the path $l_t$ is then defined as $\mathcal{E}(l_t)=\int_0^1V(s,t)\ds$. Finally, define the quadratic energy $\mathcal{Q}(l_t)=\int_0^1\|v_t(s)\|_{M_t}^2\ds$.

We start with the special case where the contraction rate $c_t\equiv0$. 
At time $t$, we consider the \textit{geodesic} $l^*_t(s)= \arg\min_{l_t} \mathcal{E}(l_t)$ among all the smooth paths connecting $x_t$ and $X_t$. After an infinitesimal time step $\dt$, consider the path $l_{t+\dt}(s)$ such that:
\begin{equation} \label{eq: l_t+dt}
    l_{t+\dt}(s)=l^*_t(s)+f(l^*_t(s),u_t)\dt+s\cdot g_t\dW_t,~\forall s\in[0,1],
\end{equation}
where $u_t$ and $g_t\dW_t$ are copies of that imposed on $X_{t}$. 
 Then, from \cite[Section 2.2.2]{tsukamoto2021contraction}, the variation from $v^*_t(s)$ to $v_{t+\dt}(s)$ can be modeled as: 
\begin{equation} \label{eq: dv_t}
    v_{t+\dt}(s) = v^*_t(s)+\frac{\partial f(l^*_t(s),u_t)}{\partial l^*_t(s)}v^*_t(s)+ g_t\dW_t.
\end{equation}

Based on \eqref{eq: dv_t}, we can analyze the variation of $\mathcal{E}(l^*_t)$. Define
\begin{equation*}
\mA(\mE(l^*_t))=\frac{\mbE(\mE(l_{t+\dt})|\mE(l^*_t))-\mE(l^*_t)}{\dt},
\end{equation*}
where the same operator $\mA(\cdot)$ is also applied to $V(s,t)$. Under this definition, we know $\mA(\mathcal{E}(l^*_t))=\int_0^1 \mA(V^*(s,t))\ds$.
To simplify $\mA(V^*(s,t))$, define $h^*(s,t)=\frac{\partial f(l^*_t(s),u_t)}{\partial l^*_t(s)}v^*_t(s)$. Then by Ito's Lemma, $\mA(V^*(s,t))$ can be unfolded as
\begin{equation*} \label{eq: AVst}
\begin{split}
\textstyle
    &\mA(V^*(s,t))= S_1+S_2,~ \text{where:} \\
    &S_1=\innerp{\frac{\partial V^*(s,t)}{\partial l^*_t(s)},f(l^*_t(s),u_t)}+\innerp{\frac{\partial V^*(s,t)}{\partial v^*_t(s)},h^*(s,t)}  \\
    &S_2=\frac{1}{2}\innerp{\nabla^2V^*(s,t),g_t(X_t)g_t(X_t)^\top}.
\end{split}
\end{equation*}
Define $l'_{t+\dt}(s)=l_{t+\dt}(s)|_{g_t=0}$, then by Ito's Lemma, $S_1=\frac{\mE(l'_{t+\dt})-\mE(l^*_t)}{\dt}$. Define $\dot{\mQ}(l^*_t)=\frac{\mQ(l'_{t+\dt})-\mQ(l^*_t)}{\dt}$ for the same noise-free case. By Lemma \ref{lemma: M-AMGF}, $V(s,t)$ is monotonely increasing with $\|v_t(s)\|_{M_t}$, so $l^*_t$ is also the geodesic with respect to $\mathcal{Q}(l_t)$, and $S_1$ and $\dot{\mQ}$ have the same sign. By \cite[Thm III.2]{singh2017robust}, we know $\dot{\mQ}(l^*_t)\leq 0$ when $c_t=0$. Therefore, $S_1\leq0$.

For the bound of $S_2$, define $\hat{v}_{s,t}=M_{t}^{1/2}v^*_t(s)$. Following the same steps as \cite[Thm 2 (24)]{liu2025safety}, we get
\begin{equation*}\label{eq: part 2 FPK}
    \begin{split}
        S_2=&\tfrac{1}{2}\innerp{\nabla^2V^*(s,t),g_t(X_t)g_t(X_t)^\top} \\
        =&\tfrac{1}{2} \expectw{\ell\sim\mS^{n-1}}{\innerp{\lambda^2e^{\lambda\innerp{\ell ,\hat{v}^*_t(s)}}M_{t}\ell\ell^{\top},g_t(X_t)g_t(X_t)^{\top}}} \\
        \leq& \tfrac{1}{2}\expectw{\ell\sim\mS^{n-1}}{\lambda^2e^{\lambda\innerp{\ell,\hat{v}^*_t(s)}}\tr{\ell\ell^{\top}}\,\|M_{t}g_t(X_t)g_t(X_t)^{\top}\|} \\
        \leq& \tfrac{\lambda^2\|M_t\|\sigma^2}{2}V^*(s,t) \leq \tfrac{\lambda^2\upsig_t^2}{2}V^*(s,t).
    \end{split}
\end{equation*}
Combining the bounds on $S_1$ and $S_2$, we get
\begin{equation} \label{eq: A(E(l*))}
\begin{split}
    \mA(\mathcal{E}(l^*_t)) \leq \tfrac{\lambda^2\upsig_t^2}{2}\int_0^1V^*(s,t)\ds \\
    = \tfrac{\lambda^2\upsig_t^2}{2}\int_0^1\mathcal{E}(l^*_t)\ds = \tfrac{\lambda^2\upsig_t^2}{2}\mathcal{E}(l^*_t),
\end{split}
\end{equation}
where the second row of \eqref{eq: A(E(l*))} follows the fact that for any $s\in[0,1]$, $\mathcal{E}(l^*_t)=V^*(s,t)$ due to the stationary-velocity property of the geodesic. Moreover, define 
$$\mA^*(\mathcal{E}(l^*_t))=\frac{\mbE(\mathcal{E}(l^*_{t+\dt})|\mathcal{E}(l^*_{t}))-\mathcal{E}(l^*_{t})}{\dt}.$$ 
Since $\mathcal{E}(l^*_{t+\dt})\leq \mathcal{E}(l_{t+\dt})$,  we know $\mA^*(\mathcal{E}(l^*_t))\leq \mA(\mathcal{E}(l^*_t))$. Combining the bound on $S_1$, $S_2$ and $\mA^*(\mathcal{E}(l^*_t))$, we get:
\begin{equation} \label{eq: E(l) martingale}
    \begin{split}
        \mA^*(\mathcal{E}(l^*_t))\leq \tfrac{\lambda^2\upsig_t^2}{2} \mathcal{E}(l^*_t),~ \mathcal{E}(l^*_0)=0,
    \end{split}
\end{equation}
which is a linear ODE that yields
\begin{equation}\label{eq: E<=}
        \mbE(\mathcal{E}(l^*_t))\leq e^{\frac{\lambda^2\int_0^t\sigma_\tau^2d\tau}{2}}.
    \end{equation}    
Notice that when $t$ is fixed, $M_t$ keeps stationary for different $v_t(s)$, which implies that $M_t$ defines a flat Riemannian metric at time $t$. Therefore, $l^*_t$ is a straight line \cite{singh2017robust}, $v^*_t(s)=X_t-x_t$ and $\mathcal{E}(l^*_t)=\Phi_{M_t}(X_t-x_t)$. Thus \eqref{eq: E<=} is equivalent to
\begin{equation}\label{eq: Phi|X-x|<=}
        \mbE(\Phi_{M_t}(X_t-x_t))\leq e^{\frac{\lambda^2\int_0^t\sigma_\tau^2d\tau}{2}}.
    \end{equation}    
From Lemma \ref{lemma: M-AMGF}, \eqref{eq: Phi|X-x|<=} implies that for $\forall \delta\in(0,1)$ and $\forall \varepsilon\in(0,1)$, with probability at least $1-\delta$, 
    \begin{equation}\label{eq: result c=0}
         \|X_t-x_t\|_{M_t}\leq \sqrt{(\varepsilon_1n+\varepsilon_2\log(1/\delta))\int_0^t\upsig_\tau^2d\tau},
     \end{equation}
    which corresponds to \eqref{eq: single-time bound} when $c_t\to0$. This completes the proof for the special case.

Next, we present the proof to the general case for any $c_t \in \mathbb{R}$ based on that for the special case $c_t = 0$.
 Define $\tX_t=e^{-\psi_t}X_t$ and $\tx_t=e^{-\psi_t}x_t$. 
Construct the geodesic $\tl^*_t(s)$ that connects $\tx_t$ and $\tx_t$, and define $\tl_{t+\dt}'$ such that $\tl_{t+\dt}'(s)=\tl^*_t(s)+f(l^*_t(s),u_t)\dt,~\forall s\in[0,1]$.
Following the same steps as \eqref{eq: l_t+dt}-\eqref{eq: dv_t}, we get
\begin{equation}
    \textstyle \frac{\tv'_{t+\dt}(s)-\tv^*_t(s)}{\dt}= -c_te^{-\psi_t}v^*_t(s)+e^{-\psi_t}\frac{\partial f(l^*_t(s),u_t)}{\partial l^*_t(s)}v^*_t(s).
\end{equation}
Therefore, the quadratic term $\tv^*_t(s)^\top M_t \tv^*_t(s)$ satisfies: 
\begin{equation*} \label{eq: scaling}
    \begin{split}
    \textstyle
        &\frac{\tv'_{t+\dt}(s)^\top M_{t+\dt} \tv'_{t+\dt}(s)-\tv^*_t(s)^\top M_t \tv^*_t(s)}{\dt}  \\
        =& \tv^*_t(s)^\top\dot{M_t}\tv^*_t(s) -2c_t\tv^*_t(s)^\top M_t\tv^*_t(s)^\top \\
        & +\tv^*_t(s)^\top \left( (\frac{\partial f(l^*_t(s),u_t)}{\partial l^*_t(s)})^\top M_t + M_t\frac{\partial f(l^*_t(s),u_t)}{\partial l^*_t(s)}\right)\tv^*_t(s) \\
    \end{split}
\end{equation*}
By Definition \ref{def: contraction}, we know
the equation above is $\leq0$, which implies that the system dynamics of $\tilde{x}_t$ is contracting with $\tilde{c}_t=0$ \cite{tsukamoto2021contraction}.
Then, notice that 
the diffusion term of the dynamics of $\tX_t$ satisfies $\|e^{-\psi_t}g_t(X_t)e^{-\psi_t}g_t(X_t)^{\top}\|\leq e^{-2\psi_t}\sigma^2\defeq \tilde{\sigma}_t^2$. Thus the derivation for the special case can be applied. Define $\tilde{l}^*_t(s)$ as the geodesic connecting $\tx_t$ and $\tX_t$. Following the same steps as the special case, we obtain that with probability at least $1-\delta$,
    \begin{equation}\label{eq: result tilde_c=0}
         \|\tX_t-\tx_t\|_{M_t}\leq \sqrt{(\varepsilon_1n+\varepsilon_2\log(1/\delta))\int_0^t \um_t\tilde\sigma_\tau^2d\tau}.
     \end{equation}
Recalling $X_t=e^{\psi_t}\tX_t$ and $x_t=e^{\psi_t}\tx_t$, we conclude the result of Theorem \ref{thm: single_bound}.
\end{proof}

We point out that $r_{\delta,t}$ derived from Theorem \ref{thm: single_bound} achieves an $\mO(\sqrt{\log 1/\delta})$ dependence on $\delta$ under time-varying contraction conditions, which is a fundamental improvement over the traditional ISA-based methods. 
Moreover, following \cite[Section V-E]{szy2024TAC}, it can be shown that our derived $r_{\delta,t}$ is the tightest obtainable bound under Assumptions \ref{as: sigma} and \ref{as: M}. Nevertheless, as aforementioned, the bound $r_{\delta,t}$ derived from Theorem \ref{thm: single_bound} only holds at single time instant, distinct from the trajectory-level bound $\upr_{\delta,t}$ required in Problem \ref{problem: traj-level}.

\section{Concentration of Stochastic Trajectories}
 For a safety-critical dynamical system, ensuring the safety of the entire trajectory is more significant than analyzing individual states.  
In this section, we show the concentration of the stochastic trajectory $X_t,~ t\in[0,1]$ by proposing a time-varying tube $r_{\delta,t}:[0,T]\to\R_{\geq0}$ that bounds the stochastic trajectory deviation, as required in Problem \ref{problem: single bound}.

\subsection{Martingale-Based Concentration Inequality}
For stochastic systems, martingale-based approaches are a standard paradigm for analyzing trajectory-level properties \cite{lavaei2022automated}. In particular, the \textit{affine martingale} has demonstrated its significance in addressing  concentration problems \cite{cosner2023robust,liu2025safety}. Its definition is as follows.

\begin{definition} [Affine Martingale, \cite{liu2025safety}] \label{def: CT AM}
   For a continuous stochastic process $Y_t,~ t\in[0,T]$, a nonnegative differentiable function $B(Y,t):\R^n\times[0,T]\to\R_{\geq0}$ is said to be an affine martingale (AM) of $Y_t$ if there exist $a_t\in\R,b_t\in\R_{\geq0}$ such that for all $t\leq T$ and the $\dt\to 0$: $\frac{\expect{B(Y_{t+\dt},t+\dt)|Y_t}-B(Y_t,t)}{\dt}\leq a_tB(Y_t,t)+b_t.$
\end{definition}

This type of semi-martingale was first introduced in \cite[Chapter 3]{1967stochastic}, and Definition \ref{def: CT AM} slightly generalizes it to time-varying coefficients $a_t,~b_t$. Based on the AM $B(Y_t)$, one can construct a sublevel set and quantify the probability of the trajectory $Y_t,~ t\in[0,T]$ staying in the set. This is formalized in the following lemma and its proof can be found in \cite[Lemma 4.1]{liu2025safety}.
\begin{lemma}\label{lemma: CT-AM}
    Consider a continuous-time stochastic trajectory $Y_t,~ t\in[0,T]$. Let $B(Y,t)$ be an AM of $Y_t$ with coefficients $a_t, b_t\geq0$.
    Define $\widetilde{B}(Y_t,t)=B(Y_t,t)\xi_t+\int_t^T b_\tau\xi_\tau \dd\tau$,
    where $\xi_t=e^{\int_t^Ta_\tau\dd\tau}$. Then given any $\overline{B}>0$ and the set $\mathcal{Y}_t=\{Y_t:~\widetilde{B}(Y_t,t)\leq \overline{B}\}$, it holds that $ \prob{Y_t\in\mathcal{Y}_t, \forall t\leq T}\geq 1-\frac{B(v_0,0)\xi_0+\int_0^Tb_\tau\psi_\tau\dd\tau}{\overline{B}}.$
\end{lemma}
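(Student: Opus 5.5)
The plan is to turn the affine-martingale inequality of Definition \ref{def: CT AM} into a genuine nonnegative supermartingale by an integrating factor, and then apply Ville's maximal inequality for nonnegative supermartingales. The process to use is exactly $\widetilde{B}(Y_t,t)$ from the statement. Since $\xi_t=e^{\int_t^T a_\tau\dd\tau}>0$ and $b_\tau\geq 0$, the process $\widetilde{B}(Y_t,t)$ is nonnegative. I read the numerator of the claimed bound as $B(Y_0,0)\xi_0+\int_0^T b_\tau\xi_\tau\dd\tau$, i.e.\ $v_0=Y_0$ and $\psi_\tau=\xi_\tau$ there. This is precisely $\widetilde{B}(Y_0,0)$.

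\textbf{Step 1 (drift computation).} I will compute the infinitesimal conditional drift of $\widetilde{B}$, using the same operator $\frac{\mathbb{E}(\cdot_{t+\dt}\mid Y_t)-\cdot_t}{\dt}$ as in Definition \ref{def: CT AM}. The factor $\xi_t$ is deterministic with $\dot\xi_t=-a_t\xi_t$, and $\frac{\dd}{\dt}\int_t^T b_\tau\xi_\tau\dd\tau=-b_t\xi_t$. The product rule and the AM inequality then give
\begin{equation*}
\mathcal{A}\widetilde{B}(Y_t,t)\leq \xi_t\bigl(a_tB(Y_t,t)+b_t\bigr)-a_t\xi_tB(Y_t,t)-b_t\xi_t=0.
\end{equation*}
This holds for either sign of $a_t$, because $\xi_t>0$ preserves the inequality. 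Hence $\widetilde{B}(Y_t,t)$ has nonpositive drift.

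\textbf{Step 2 (supermartingale property).} Next I will upgrade the pointwise infinitesimal bound to $\mathbb{E}(\widetilde{B}(Y_t,t)\mid \mathcal{F}_s)\leq \widetilde{B}(Y_s,s)$ for $s\leq t\leq T$. The route is to write $\widetilde{B}(Y_t,t)-\widetilde{B}(Y_s,s)$ via Dynkin's formula (It\^o's formula when $B$ is $C^{2}$ in $Y$) as the integral of the drift plus a local martingale. I then localize with stopping times $\tau_k=\inf\{t:\|Y_t\|\geq k\}$, take expectations, and pass $k\to\infty$ using Fatou's lemma together with nonnegativity. Fatou's lemma gives the inequality in the correct direction for supermartingales, so no integrability of the local-martingale part beyond localization is needed.

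\textbf{Step 3 (maximal inequality).} Since $\widetilde{B}(Y_t,t)$ is a nonnegative supermartingale with continuous paths, Ville's inequality (Doob's maximal inequality for nonnegative supermartingales) gives
\begin{equation*}
\mathbb{P}\Bigl(\sup_{t\leq T}\widetilde{B}(Y_t,t)\geq\overline{B}\Bigr)\leq\frac{\widetilde{B}(Y_0,0)}{\overline{B}}.
\end{equation*}
The complement event is $\{Y_t\in\mathcal{Y}_t,\ \forall t\leq T\}$, up to the boundary case $\widetilde{B}=\overline{B}$, which only makes the stated bound more conservative. Substituting $\widetilde{B}(Y_0,0)=B(Y_0,0)\xi_0+\int_0^Tb_\tau\xi_\tau\dd\tau$ yields the claim.

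I expect the main obstacle to be Step 2, the rigorous passage from the generator-type inequality in Definition \ref{def: CT AM} to an actual supermartingale. That definition is stated only as a limit of conditional expectations as $\dt\to0$. The cleanest fix is to assume enough regularity on $B$ for Dynkin's formula, and then handle integrability by localization and Fatou as described. Path continuity of $Y_t$ is also needed in Step 3, so that the supremum over $[0,T]$ is measurable and the maximal inequality applies.
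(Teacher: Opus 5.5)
Your proposal is correct and follows the same argument the paper relies on; the paper gives no proof of its own and defers to \cite[Lemma 4.1]{liu2025safety}. There, the deterministic integrating factor $\xi_t$ cancels the affine drift so that $\widetilde{B}(Y_t,t)$ becomes a nonnegative supermartingale, and Ville's maximal inequality then gives the bound with numerator $\widetilde{B}(Y_0,0)$. You correctly read $v_0$ and $\psi_\tau$ as typos for $Y_0$ and $\xi_\tau$, and your localization-plus-Fatou step supplies rigor that the infinitesimal Definition~\ref{def: CT AM} leaves implicit.
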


If one can construct a proper AM for the deviation between associated trajectories $X_t$ of \eqref{eq: stochastic_system} and $x_t$ of \eqref{eq: deterministic_system}, then Lemma \ref{lemma: CT-AM} points to a solution to Problem \ref{problem: traj-level}. Leveraging both AM and AMGF, we now establish the following theorem, which characterizes the concentration of the stochastic trajectory.

\begin{thm} \label{thm: traj_bound}
    Consider a trajectory $X_t$ of the stochastic system 
     \eqref{eq: stochastic_system} 
    and its associated nominal trajectory $x_t$ of the system 
    \eqref{eq: deterministic_system}. Suppose that Assumption \ref{as: sigma} and \ref{as: M} hold for any trajectory of \eqref{eq: deterministic_system}. Given the terminal time $T$, $\delta\in(0,1)$ and $\varepsilon\in(0,1)$, define:
     \begin{equation}\label{eq: r traj}
        \textstyle \upr_{\delta,t}=
\sqrt{e^{2\psi_t}\Psi_T(\varepsilon_1n+\varepsilon_2\log(1/\delta))}
    \end{equation}
    where $\psi_t$, $\Psi_t$, $\varepsilon_1$, $\varepsilon_2$ are as Theorem \ref{thm: single_bound}. Then it holds that $\prob{\|X_t-x_t\|_{M_t}\leq \upr_{\delta,t}, \forall t\leq T}\geq 1-\delta.$
\end{thm}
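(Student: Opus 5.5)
We will combine the AMGF energy from the proof of Theorem \ref{thm: single_bound} with the affine-martingale machinery of Lemma \ref{lemma: CT-AM}.

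As in that proof, we first pass to the rescaled processes $\tX_t=e^{-\psi_t}X_t$ and $\tx_t=e^{-\psi_t}x_t$. The dynamics of $\tx_t$ are contracting with rate $\tilde c_t=0$ in the metric $M_t$. The diffusion of $\tX_t$ satisfies $\tilde\sigma_t^2=e^{-2\psi_t}\sigma^2$, so in the $M_t$-weighted sense the effective noise level is $\upsig_t^2e^{-2\psi_t}$.

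We then take the candidate affine martingale
\[
B(\tX_t,t)=\Phi_{M_t}(\tX_t-\tx_t).
\]
Because $M_t$ is fixed at each instant, the geodesic is a straight line, so $B$ coincides with the path energy $\mathcal{E}(\tilde l^*_t)$. The generator estimate \eqref{eq: E(l) martingale} from the special case then gives
\[
\frac{\expect{B(\tX_{t+\dt},t+\dt)\,|\,\tX_t}-B(\tX_t,t)}{\dt}\le \frac{\lambda^2\upsig_t^2e^{-2\psi_t}}{2}\,B(\tX_t,t).
\]
Hence $B$ is an AM in the sense of Definition \ref{def: CT AM}, with $a_t=\tfrac{\lambda^2}{2}\upsig_t^2e^{-2\psi_t}$ and $b_t=0$. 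The key point, which needs checking, is that the drift inequality holds pathwise in the conditional sense and not only after taking expectations. This holds because $S_1\le0$ and $S_2\le a_tV$ were shown pointwise for every realization.

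Next we apply Lemma \ref{lemma: CT-AM}. Here $\xi_t=e^{\int_t^Ta_\tau\dtau}\ge1$ and $\xi_0=e^{\lambda^2\Psi_T/2}$. Since $B(\tX_0,0)=\Phi(0)=1$ and $b_t=0$, the lemma gives
\[
\prob{B(\tX_t,t)\xi_t\le \overline{B},~\forall t\le T}\ge 1-\frac{e^{\lambda^2\Psi_T/2}}{\overline{B}}.
\]
Using $\xi_t\ge1$, the event contains $\{B(\tX_t,t)\le\overline B,~\forall t\le T\}$. We choose $\overline{B}=e^{\lambda^2\Psi_T/2}/\delta$, so this event has probability at least $1-\delta$.

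On this event, Lemma \ref{lemma: M-AMGF}(3) gives, for all $t\le T$ simultaneously,
\[
(1-\varepsilon^2)^{n/2}e^{\varepsilon\lambda\|\tX_t-\tx_t\|_{M_t}}\le e^{\lambda^2\Psi_T/2}/\delta.
\]
Solving for the norm yields
\[
\|\tX_t-\tx_t\|_{M_t}\le \frac{1}{\varepsilon\lambda}\Big(\tfrac{n}{2}\log\tfrac{1}{1-\varepsilon^2}+\log\tfrac1\delta\Big)+\frac{\lambda\Psi_T}{2\varepsilon}.
\]
Because $\lambda$ is a free constant fixed before the process runs, we may optimize it deterministically. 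This reproduces the computation behind Lemma \ref{lemma: M-AMGF}(4) with $\vartheta^2=\Psi_T$, giving the bound $\sqrt{\Psi_T(\varepsilon_1n+\varepsilon_2\log(1/\delta))}$ uniformly in $t$. We take this algebra from the proof of item 4 rather than redo it. Multiplying back by $e^{\psi_t}$ yields $\upr_{\delta,t}$ in \eqref{eq: r traj}.

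The main obstacle is the AM step. The generator bound in Theorem \ref{thm: single_bound} was derived for $\mathcal{E}(l^*_t)$ by comparing against the non-geodesic path $l_{t+\dt}$ and then using $\mathcal{E}(l^*_{t+\dt})\le\mathcal{E}(l_{t+\dt})$. We must confirm that this yields a genuine conditional supermartingale-type inequality for the state-dependent function $B(\tX_t,t)$, including the contribution of $\dot M_t$, which is absorbed through the contraction condition \eqref{eq: contracting M}.

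If the time-varying metric causes trouble, the fallback is to verify directly via Itô's lemma on $\Phi_{M_t}(\tX_t-\tx_t)$. In that computation the first-order terms are controlled by \eqref{eq: contracting M} through the mean-value form of $f(X)-f(x)$ along the segment. This is valid because a mean-value Jacobian averaged along a segment still satisfies \eqref{eq: contracting M}, by convexity of the LMI in the Jacobian.
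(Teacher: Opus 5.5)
Your proposal is correct and follows the paper's proof. You rescale by $e^{-\psi_t}$, take $\Phi_{M_t}(\tX_t-\tx_t)$ as an affine martingale with $a_t=\frac{\lambda^2}{2}\upsig_t^2e^{-2\psi_t}$ and $b_t=0$, apply Lemma~\ref{lemma: CT-AM} with $\xi_t\ge 1$, then Lemma~\ref{lemma: M-AMGF}(3) with a deterministic choice of $\lambda$; the paper instead handles $c_t\equiv 0$ first and sets $\overline{B}=\Phi(\upr\eta)$ using monotonicity of $\Phi$, which is equivalent. One inclusion is worded backwards: $\xi_t\ge 1$ gives $\{B\xi_t\le\overline{B},\ \forall t\le T\}\subseteq\{B\le\overline{B},\ \forall t\le T\}$, which is the direction that carries the probability bound over. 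Your direct It\^o fallback with the segment-averaged Jacobian is valid and is a cleaner justification of the affine-martingale step than the paper's geodesic argument.
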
 
\begin{proof}
    The inequality \eqref{eq: E(l) martingale} implies that $\mathcal{E}(l^*_t)=\Phi_{M_t}(X_t-x_t)$ is an affine martingale defined in \cite{liu2025safety} with $a_t=\frac{\lambda^2\upsig_t^2}{2}$ and $b_t\equiv0$, and from \eqref{eq: Phi|X-x|<=} we know $\mathcal{E}(l^*_t)=\Phi_{M_t}(X_t-x_t)$. Therefore, for every $\upr>0$ and any $\eta\in\mS^{n-1}$, we get
\begin{equation}\label{eq: CT prob |v_t|<r_lam}
    \begin{split}
        &\prob{\|X_t-x_t\|_{M_t}\leq \upr, \forall t\leq T} \\
        =&  \prob{\|X_t-x_t\|_{M_t}\leq \|\upr\eta\|,~ \forall t\leq T}\\
        =&\prob{\Phi_{M_t}(X_t-x_t)\leq \amgf{\upr\eta}, \forall t\leq T} ~~~~ [\text{\textbf{Lemma \ref{lemma: M-AMGF}}}] \\
        =& \prob{e^{\frac{\lambda^2\int_t^T\upsig_\tau^2\dtau}{2}}\mathcal{E}(l^*_t)\leq e^{\frac{\lambda^2\int_t^T\upsig_\tau^2\dtau}{2}}\amgf{\upr\eta}, \forall t\leq T} \\
        \geq& \prob{e^{\frac{\lambda^2\int_t^T\upsig_\tau^2\dtau}{2}}\mathcal{E}(l^*_t)\leq \amgf{\upr\eta}, \forall t\leq T} \\
        \geq &1-\frac{e^{\frac{\lambda^2\int_0^T\upsig_\tau^2\dtau}{2}}}{\amgf{\upr\eta}},\quad \forall \eta\in\mS^{n-1} ~~~~[\textbf{Lemma \ref{lemma: CT-AM}}]\\
        \geq & 1- (1-\varepsilon^2)^{-\frac{n}{2}}\exp\left(\tfrac{\lambda^2\int_0^T\upsig_\tau^2\dtau}{2}-{\varepsilon\lambda \upr}\right) [\text{\textbf{Lemma \ref{lemma: M-AMGF}}}].
    \end{split}
\end{equation}
Minimizing the last line of \eqref{eq: CT prob |v_t|<r_lam} over $\lambda$, we get $\lambda^{*}=\frac{\varepsilon r}{\int_0^T\upsig_\tau^2\dtau}$. By Plugging $\lambda=\lambda^{*}$ into \eqref{eq: CT prob |v_t|<r_lam} and setting $$\upr=\sqrt{\tfrac{2\int_0^T\upsig_\tau^2\dtau}{\varepsilon^2}(\tfrac{n}{2}\log\left(\tfrac{1}{1-\varepsilon^2}\right)+\log(1/\delta))},$$
we arrive at the result of Theorem \ref{thm: traj_bound} in the case that $c\to0$. 

For general cases where $c_t\in\R$, we use the same scaling technique as that in the proof of Theorem \ref{thm: single_bound}. Define $\tX_t=e^{-\psi_t}X_t$ and $\tx_t=e^{-\psi_t}x_t$. From \eqref{eq: scaling} we know  the system dynamics of $\tilde{x}_t$ is contracting with $\tilde{c}=0$, and $\tX_t$ is perturbed by the stochastic with the diffusion term bounded by $\tilde{\sigma}_t^2=e^{-2\psi_t}\sigma^2$. Thus the derivation for the special case can be applied. Following the same steps as \eqref{eq: AVst}-\eqref{eq: E(l) martingale}, we know $\mE(\tl^*_t)=\Phi_{M_t}(\tX_t-\tx_t)$ is an affine martingale with $a_t=\frac{\lambda^2\um_t\tilde{\sigma}_t^2}{2}$ and $b_t\equiv0$. Then following \eqref{eq: CT prob |v_t|<r_lam} and the steps afterwards, we get
\begin{equation}\label{eq: CT r_scale}
    \prob{\|\tX_t-\tx_t\|_{M_t}\leq \tilde{r}_t, \forall t\leq T}\geq 1-\delta,
\end{equation}
where $\tilde{r}_t=\sqrt{\frac{\int_0^T \upsig_\tau^2e^{-2\psi_\tau}\dtau}{\varepsilon^2}(n\log\frac{1}{1-\varepsilon^2}+2\log(1/\delta))}$. Then we complete the proof by Recalling $X_t=e^{\psi_t}\tX_t$, $x_t=e^{\psi_t}\tx_t$.
\end{proof}

Similarly, the bound $\upr_{\delta,t}$ has $\mO(\sqrt{\log(1/\delta)})$ dependence on $\delta$, which scales slowly even when $\delta$ is very small. However, when the systems \eqref{eq: stochastic_system} and \eqref{eq: deterministic_system} are strongly contracting over a large horizon $T$, the term $\Phi_T$ in $\upr_{\delta,t}$ scales as $\tmO(e^{CT})$ with some positive constant $C$, thus dominating the coefficient $e^{2\psi_t}\Psi_T$ when $t\ll T$. This dominance renders the derived $\upr_{\delta,t}$ overly conservative.

\begin{figure}[t]
 \centering
\includegraphics[width =0.49\linewidth]{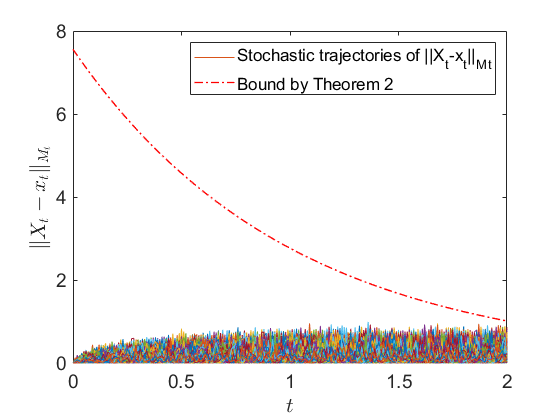}
\includegraphics[width =0.49\linewidth]{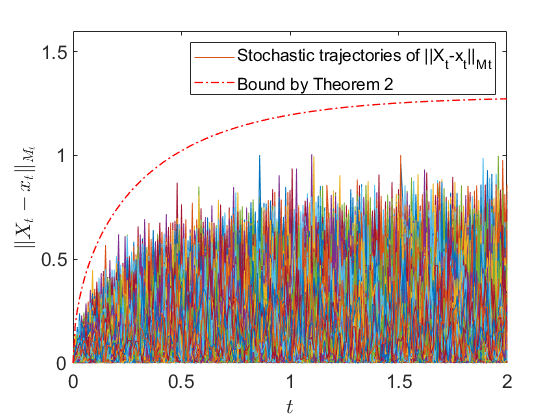}
  \caption{Comparison of the $\upr_{\delta,t}$ derived by Theorem \ref{thm: traj_bound} (Left) and Thorem \ref{thm: traj c<0}. The experiment is done on a linear system $\dX_t=A_tX_t+\Sigma\dW_t$ with strongly contracting $A_t$. Each figure contains 5000 independent trajectories of $\|X_t-x_t\|_{M_t}$, and use $\delta=0.001$, $\varepsilon=15/16$.
  }
	\label{fig: AM vs union}
 \end{figure}

\subsection{Improvement for Strongly Contracting Systems}
When the system is strongly contractive, the bound \eqref{eq: r traj} becomes conservative when $T$ is large. To overcome this drawback, we split $[0,T]$ into short intervals with length $\Delta t$, then apply Theorem \ref{thm: traj_bound} within each segment and Theorem \ref{thm: single_bound} at the end-points, and finally use union-bound inequality to obtain the $\upr_{\delta,t}$ for the entire trajectory. The improved result is stated in the following theorem.

\begin{thm} \label{thm: traj c<0}
    Consider a trajectory $X_t$ of the stochastic system 
     \eqref{eq: stochastic_system} 
    and its associated nominal trajectory $x_t$ of the system 
    \eqref{eq: deterministic_system}. Suppose that Assumption \ref{as: sigma} and \ref{as: M} hold with $c_t<0$. Given the terminal time $T$, $\delta\in(0,1)$, $\varepsilon\in(0,1)$ and $\Delta t>0$, define $k=\lceil \frac{t}{\Delta t}\rceil$, $\Psi_{t}^{\Delta t}=\int_{k\Delta t}^{(k+1)\Delta t} \upsig_\tau^2e^{-2\psi_\tau}\dtau$ and:
     \begin{equation}\label{eq: r traj c<0}
        \upr_{\delta,t}=
(\sqrt{e^{2\psi_t}\Psi_t}+\sqrt{\Psi_{t}^{\Delta t}})\sqrt{\varepsilon_1n+\varepsilon_2\log\frac{2T}{\delta \Delta t}}
    \end{equation}
     Then it holds that
    \begin{equation}
        \prob{\|X_t-x_t\|_{M_t}\leq \upr_{\delta,t}, \forall t\leq T}\geq 1-\delta.
    \end{equation}
\end{thm}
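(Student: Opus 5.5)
Partition $[0,T]$ into $N=\lceil T/\Delta t\rceil$ consecutive intervals $I_k=[k\Delta t,(k+1)\Delta t]$. On each interval, compare $X_t$ with the deterministic trajectory $x^{(k)}_t$ of \eqref{eq: deterministic_system} that starts at the random point $x^{(k)}_{k\Delta t}=X_{k\Delta t}$ and uses the same input. The triangle inequality in the fixed-time norm $\|\cdot\|_{M_t}$ then gives
\begin{equation*}
\|X_t-x_t\|_{M_t}\leq \|x^{(k)}_t-x_t\|_{M_t}+\|X_t-x^{(k)}_t\|_{M_t},\quad t\in I_k .
\end{equation*}
The first term is a purely deterministic contraction term, controlled by the starting deviation $\|X_{k\Delta t}-x_{k\Delta t}\|_{M_{k\Delta t}}$. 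The second term is a fresh stochastic fluctuation over a window of length $\Delta t$. The proof is then a union bound over $2N$ events, each with probability level $\delta/(2N)\geq \delta\Delta t/(2T)$ up to the rounding of $N$. This produces the $\log\frac{2T}{\delta\Delta t}$ factor in \eqref{eq: r traj c<0}.

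\emph{Step 1 (endpoint events).} At each grid time $k\Delta t$, apply Theorem \ref{thm: single_bound}. With probability at least $1-\delta/(2N)$,
\begin{equation*}
\|X_{k\Delta t}-x_{k\Delta t}\|_{M_{k\Delta t}}\leq \sqrt{e^{2\psi_{k\Delta t}}\Psi_{k\Delta t}\,L},\qquad L=\varepsilon_1n+\varepsilon_2\log\tfrac{2T}{\delta\Delta t}.
\end{equation*}
By Assumption \ref{as: M} (the incremental-stability consequence of Definition \ref{def: contraction}, via the same geodesic argument as in the proof of Theorem \ref{thm: single_bound} with $g_t=0$), the deterministic flow satisfies
\begin{equation*}
\|x^{(k)}_t-x_t\|_{M_t}\leq e^{\psi_t-\psi_{k\Delta t}}\,\|X_{k\Delta t}-x_{k\Delta t}\|_{M_{k\Delta t}}.
\end{equation*}
This gives $\sqrt{e^{2\psi_t}\Psi_{k\Delta t}L}$. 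Since $\Psi$ is nondecreasing, this is at most $\sqrt{e^{2\psi_t}\Psi_t L}$, which is the first term of \eqref{eq: r traj c<0}. This is where strong contraction pays off: no $\Psi_T$ appears.

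\emph{Step 2 (in-window events).} On each $I_k$, apply Theorem \ref{thm: traj_bound} to the system restarted at time $k\Delta t$. The process is Markov and the restarted pair has equal initial states, so the theorem applies conditionally on $\mathcal{F}_{k\Delta t}$ with time shifted. It gives, with probability at least $1-\delta/(2N)$,
\begin{equation*}
\|X_t-x^{(k)}_t\|_{M_t}\leq \sqrt{e^{2(\psi_t-\psi_{k\Delta t})}\textstyle\int_{k\Delta t}^{(k+1)\Delta t}\upsig_\tau^2e^{-2(\psi_\tau-\psi_{k\Delta t})}\dtau\,L}\quad\text{for all } t\in I_k.
\end{equation*}
The $\psi_{k\Delta t}$ factors cancel, leaving $\sqrt{e^{2\psi_t}\Psi^{\Delta t}_t L}$. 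Since $c_t<0$, we have $\psi_t\leq0$, hence $e^{2\psi_t}\leq1$ and the expression is at most $\sqrt{\Psi^{\Delta t}_t L}$, matching the second term. The index convention for $k$ versus $\lceil t/\Delta t\rceil$ must be aligned with the statement. If needed, I will use $k=\lfloor t/\Delta t\rfloor$ in the argument and note that enlarging the integration window only increases the bound.

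\emph{Step 3 (combine).} A union bound over the $2N$ events gives total failure probability at most $\delta$. On their intersection, the triangle inequality yields $\|X_t-x_t\|_{M_t}\leq\upr_{\delta,t}$ for all $t\leq T$.

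The main obstacle is Step 2. It requires justifying that Theorem \ref{thm: traj_bound} can be applied conditionally from a random starting point $X_{k\Delta t}$, with a time-shifted $\psi$. It also requires that the deterministic comparison trajectory $x^{(k)}$ enjoys the contraction hypothesis; Assumption \ref{as: M} is stated for any trajectory, which covers this. The conditional application I would handle via the strong Markov property, together with the fact that the affine-martingale bound of Lemma \ref{lemma: CT-AM} holds conditionally on $\mathcal{F}_{k\Delta t}$ with initial value $\Phi_{M}(0)=1$.
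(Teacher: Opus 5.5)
Your proposal is correct and follows the paper's own argument: restart a deterministic comparison trajectory at $X_{k\Delta t}$, bound its distance to $x_t$ by contraction plus Theorem~\ref{thm: single_bound} at the grid points, bound the in-window fluctuation by Theorem~\ref{thm: traj_bound} on each window of length $\Delta t$, and union-bound over $2N$ events; your bookkeeping is actually tighter than the paper's, since keeping the factor $e^{\psi_t-\psi_{k\Delta t}}$ is what turns $\sqrt{e^{2\psi_{k\Delta t}}\Psi_{k\Delta t}}$ into $\sqrt{e^{2\psi_t}\Psi_{k\Delta t}}\le\sqrt{e^{2\psi_t}\Psi_t}$, whereas the paper drops this factor and defers the step to a citation. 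Two small fixes: first, with $N=\lceil T/\Delta t\rceil$ one has $\delta/(2N)\le\delta\Delta t/(2T)$, not $\ge$, so the $\log\frac{2T}{\delta\Delta t}$ factor is only justified when $T/\Delta t\in\mathbb{N}$, as the paper assumes; second, $k=\lceil t/\Delta t\rceil$ shifts the integration window rather than enlarging it, so your $\lfloor t/\Delta t\rfloor$ convention (the one the paper's proof actually uses) should be presented as correcting a typo in the statement, not justified by monotonicity.
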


\begin{proof}
     Define $N=T/\Delta t$\footnote{We choose $\Delta t$ so that $N=T/\Delta t$ is an integer for convenience, but the same conclusion holds for arbitrary $\Delta t$.}. For $t=k\Delta t$, $k=1,\dots, N$, let 
 $$r_{k\Delta t}= \sqrt{e^{2\psi_{k\Delta t}}\Psi_{k\Delta t}(\varepsilon_1n+\varepsilon_2\log\frac{2N}{\delta})}.$$
 Then, by Theorem \ref{thm: single_bound}, for any $k$,
\begin{equation}\label{eq: t=kdt}
\begin{split}
    \mathbb{P}\left(\|X_{k\Delta t}-x_{k\Delta t}\|_{M_{k\Delta t}}\leq r_{k\Delta t}\right)\geq 1-\frac{\delta}{2N}. 
\end{split}
\end{equation}

For any $t\in(k\Delta t, (k+1)\Delta t)$, $k=0,\dots,N-1$, define a trajectory $y_t^{(k)}$ that satisfies
\begin{equation}
    \begin{split}
        \dot{y}_t^{(k)} = f(y_t^{(k)},d_t,t), \,\,\, y_{k\Delta t}^{(k)} = X_{k\Delta t}.
    \end{split}
\end{equation}
Then on the interval $(k\Delta t, (k+1)\Delta t)$, it holds that 
\begin{equation}\label{eq: sde xt'-xt}
    \dot{x}_t-\dot{y}_t^{(k)}=f(x_t,d_t,t)-f(y_t^{(k)},d_t,t).
\end{equation}
Since $f(x_t,d_t,t)$ is contracting, the contraction theory on Riemannian field \cite{singh2017robust} implies that
\begin{equation}\label{eq: xt'-xt}
\begin{split}
    &\|x_t-y_t^{(k)}\|_{M_t}\leq e^{\int_{k\Delta t}^t c_\tau\dtau}\|x_{k\Delta t}-y_{k\Delta t}^{(k)}\|_{M_{k\Delta t}} \\
    \leq &\|x_{k\Delta t}-y_{k\Delta t}^{(k)}\|_{M_{k\Delta t}}=\|x_{k\Delta t}-X_{k\Delta t}\|_{M_{k\Delta t}}.
\end{split}
\end{equation}

Note that $X_t$ and $y_t^{(k)}$ are associated trajectories over the time horizon $(k\Delta t, (k+1)\Delta t)$.
Let
$$\upr^\Delta=\sqrt{\Psi_{t}^{\Delta t}(\varepsilon_1n+\varepsilon_2\log\frac{2N}{\delta})}$$
and $\tilde{r}^{\Delta}=e^{\int_{k\Delta t}^t c_\tau\dtau}\upr^\Delta$, then $\tilde{r}^{\Delta}\leq\upr^\Delta$ and it holds that
\begin{equation} \label{eq: Xt'-xt}
    \begin{split}
        &\prob{\|X_t-y_t^{(k)}\|_{M_t}\leq \upr^\Delta,~ \forall t\in(k\Delta t, (k+1)\Delta t)} \\
        \geq &\prob{\|X_t-y_t^{(k)}\|_{M_t}\leq \tilde{r}^{\Delta},~ \forall t\in(k\Delta t, (k+1)\Delta t)} \\
        \geq &1-\frac{\delta}{2N},
    \end{split}
\end{equation}
 where the second ``$\geq$'' directly follows Theorem \ref{thm: traj_bound} by setting the time period as $\Delta t$ and the initial time as $k\Delta t$.

Next, by combining \eqref{eq: t=kdt} with \eqref{eq: Xt'-xt} and following the same union-bound technique as in \cite[Thm 3]{liu2025safety}, we can arrive the result of Theorem \ref{thm: traj c<0} to complete the proof.
\end{proof}

In comparison, the term $\Phi_T$ produced by Theorem \ref{thm: traj_bound} is replaced by the term $\Psi_{t}^{\Delta t}$ in Theorem \ref{thm: traj c<0}, which only scales as $\tmO(e^{C\Delta t})$ with some constant $C>0$,. Although the use of union-bound inequality causes an additional $\mO(\sqrt{\log\frac{T}{\Delta t}})$ factor, typically $e^{-cT}\gg e^{-c\Delta t}+\sqrt{\log\frac{T}{\Delta t}}$ for large $T$ and relatively small $\Delta t$. Consequently, Theorem~\ref{thm: traj c<0} provides a sharper bound on PT than Theorem~\ref{thm: traj_bound}. Figure \ref{fig: single t vs traj} illustrates this improvement via a simulation of the strongly contractive linear system $\dX_t=A_tX_t+\Sigma\dW_t$.

\section{Case Study}
In this case study, we demonstrate the application of the theoretical results in safety-aware stochastic system control, and exemplify it through a planar vertical take-off and landing (PVTOL) system with 99.99\% formal safety guarantee.

Consider the stochastic control system trajectory as
\begin{equation}\label{sys: control}
\begin{split}
    \dX_t&=f(X_t,K(X_t),u_t)\dt + g_t(X_t)\dW_t
    \\&:=f_{cl}(X_t,u_t)\dt+g_t(X_t)\dW_t,~~~ \text{given } X_0
\end{split}
\end{equation}
where $K(x)\in\R^p$ is a state-feedback controller, $f_{cl}(x,u)=f(x,K(x),u)$ is the closed-loop dynamics, and $u_t$ is a safety-aware openloop input. Given a safe set $\mC\subset\R^n$, a time period $[0,T]$ and a probability level $\delta$. By the set-erosion strategy \cite{liu2024safety}, to ensure that $\prob{X_t\in \mC, \forall t\leq T}\geq 1-\delta$, it is sufficient to design $K(x_t)$ and $u_t$ on the \textit{deterministic} $x_t$ such that
\begin{itemize}
    \item[$*$]   $\dot{x}_t=f_{cl}(x_t,u_t),~ x_0=X_0\in \mC$.
    \item[$*$]   $f_{cl}(x_t,u_t)$ satisfies Assumption \ref{as: M} with $c_t<0$.
    \item[$*$]  $x_t\in\mC\ominus \mathcal{B}(\upr_{\delta,t})$ $\forall t\leq T$, where $\upr_{\delta,t}$ is as Theorem \ref{thm: traj c<0}, $ \mathcal{B}(r)\in\R^n$ is a centered ball with radius $r$, and $\ominus$ denotes the Minkowskii difference. 
\end{itemize}

To exemplify this safety-aware control scheme, we consider a PVTOL system with additive stochastic disturbance modeled as \eqref{sys: control}. The formula of $f$ and the parameter settings are adopted from \cite{sun2021learning}. 
The planning task is a reach-avoid specification over horizon $T=2.5$ s.
The goal region is a circle in the \((x,z)\)-plane.
Obstacles consist of one box and two circular obstacles as visualized in Fig. \ref{fig: nominal traj PVTOL}.

To calculate the radius of the probabilistic tube, we consider a probability level $\delta = 10^{-4}$, $\varepsilon=0.9$ and $\Delta t=0.01$. We apply time-varying linear quadratic regulator (TVLQR) within the safe region to design the feedback control $K(x)$ and acquire the contraction metric $M_t$.
The time-varying contraction rate is estimated under this metric via sampling. This yields an ellipsoid probabilistic tube in the full state space: $\|X_t-x_t\|_{M_t} \leq \upr_{\delta,t}$, where $\upr_{\delta,t}$ is as described in \eqref{eq: r traj c<0}, and the nominal trajectory $\{x_t,u_t\},~t\leq T$ satisfies the control scheme ($*$).
Since the planning constraints (obstacles) are defined only in the $x$-$z$ position plane, we project the ellipsoid onto the $x$-$z$ plane and compute the radius of the outer approximating circle of the resulting 2D ellipse. In this example, the radius attains a maximum value of 0.54 over time, which is used to enlarge the obstacles and shrink the goal, as shown in Figure~\ref{fig: nominal traj PVTOL}. As a comparison, the bound $r_{\delta,t}$ derived by standard ISA \cite{pham2009contraction} is so large (>10) that it cannot be visualized in this experiment, indicating the tightness of our results.

\begin{figure}[t]
 \centering
 \includegraphics[width =0.51\linewidth]{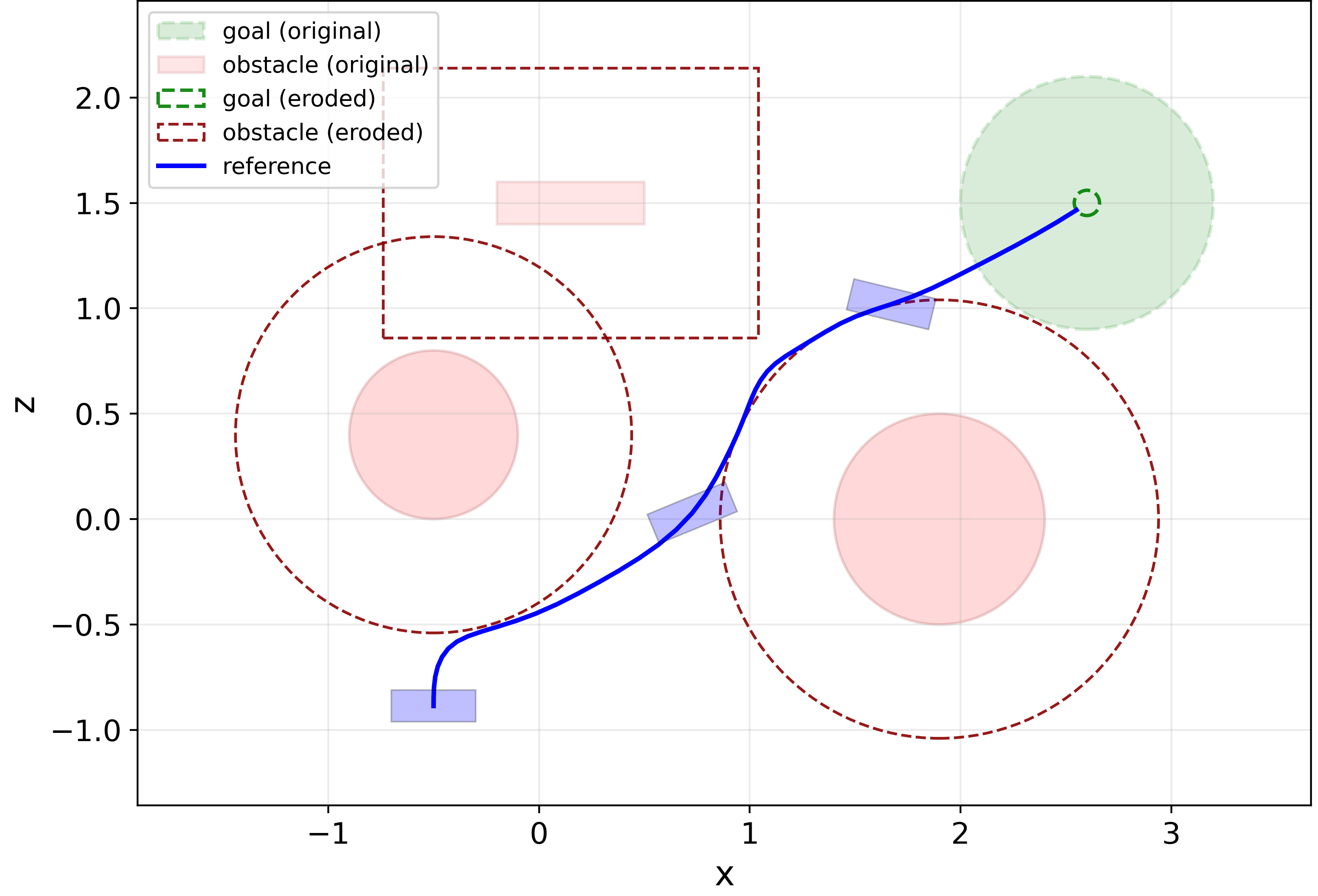}
 \includegraphics[width =0.47\linewidth]{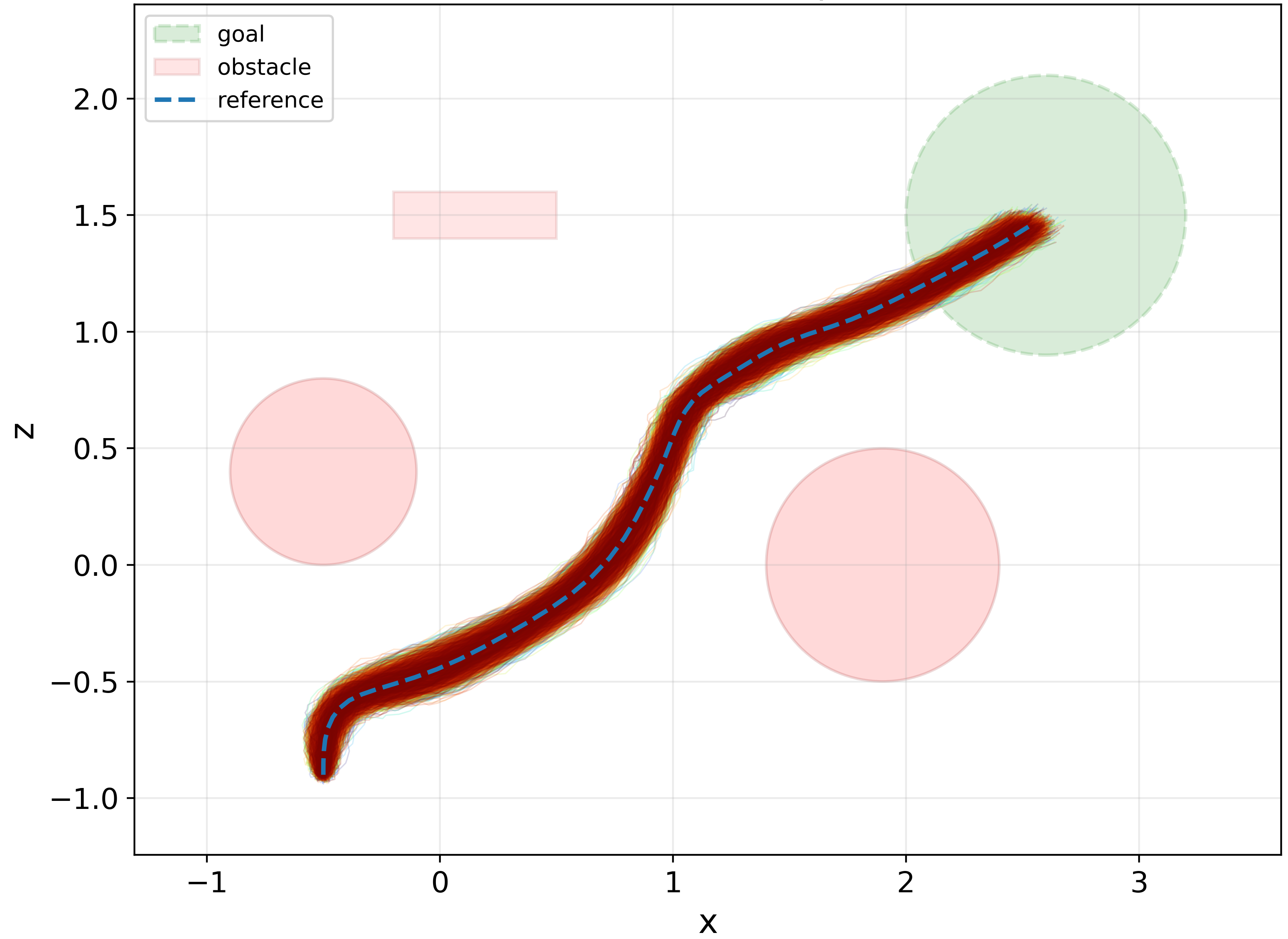}
  \caption{\textbf{Safe planning for the PVTOL system.} The robot must reach the goal region while avoiding obstacles. \textbf{Left}: The obstacles are enlarged according to the computed probabilistic bound \eqref{eq: r traj c<0}. The nominal trajectory of the PVTOL system remains collision-free and reaches the shrunken goal region. \textbf{Right}: $10^4$ stochastic rollouts of the TVLQR-controlled system.}
	\label{fig: nominal traj PVTOL}
\end{figure}

To validate the safe controller design scheme, we simulate $10^4$ stochastic trajectories. In Figure~\ref{fig: nominal traj PVTOL}, the nominal trajectory does not collide with the enlarged obstacles. As a result, all stochastic trajectories remain collision-free.

\section{Conclusion}
In this paper, we investigated the concentration behavior of stochastic system trajectories under time-varying contraction conditions. By combining a novel function termed AMGF with contraction theory and martingale-based methods, we established concentration inequalities for both stochastic system states and trajectories. Moreover, for strongly contracting systems, we significantly sharpened the trajectory-level bound by synthesizing Theorem \ref{thm: single_bound} and \ref{thm: traj_bound}. Our derived bounds achieve the tightest dependence on the probability level, which provides rigorous safety verification for safety-critical applications. The theoretical results were validated through an example of safety-aware stochastic control.

\bibliographystyle{ieeetr}
\bibliography{main}

@book{BO:13,
  title={Stochastic differential equations: an introduction with applications},
  author={{\O}ksendal, B.},
  year={2013},
  publisher={Springer Berlin, Heidelberg},
  series = {Universitext}, 
  doi = {10.1007/978-3-642-14394-6}, 
}

@article{liu2025safety,
      title={Safety Verification of Nonlinear Stochastic Systems via Probabilistic Tube}, 
      author={Zishun Liu and Saber Jafarpour and Yongxin Chen},
      journal={IEEE Transactions on Automatic Control},
      pages={1-15},
      year={2026}
}

@inproceedings{singh2017robust,
  title={Robust online motion planning via contraction theory and convex optimization},
  author={Singh, Sumeet and Majumdar, Anirudha and Slotine, Jean-Jacques and Pavone, Marco},
  booktitle={2017 IEEE International Conference on Robotics and Automation (ICRA)},
  pages={5883--5890},
  year={2017},
  organization={IEEE}
}

@article{szy2024TAC,
  title={Probabilistic Reachability Analysis of Stochastic Control Systems},
  author={S. Jafarpour$^{*}$ and Z. Liu$^{*}$ and Y. Chen},
  journal={IEEE Transactions on Automatic Control},
  volume={70},
  number={11},
  pages = {7080-7094},
  year={2025}
}

@article{tsukamoto2021contraction,
  title={Contraction theory for nonlinear stability analysis and learning-based control: A tutorial overview},
  author={Tsukamoto, Hiroyasu and Chung, Soon-Jo and Slotine, Jean-Jaques E},
  journal={Annual Reviews in Control},
  volume={52},
  pages={135--169},
  year={2021},
  publisher={Elsevier}
}

@article{manchester2017control,
  title={Control contraction metrics: Convex and intrinsic criteria for nonlinear feedback design},
  author={Manchester, Ian R and Slotine, Jean-Jacques E},
  journal={IEEE Transactions on Automatic Control},
  volume={62},
  number={6},
  pages={3046--3053},
  year={2017},
  publisher={IEEE}
}

@article{pham2009contraction,
  title={A contraction theory approach to stochastic incremental stability},
  author={Pham, Quang-Cuong and Tabareau, Nicolas and Slotine, Jean-Jacques},
  journal={IEEE Transactions on Automatic Control},
  volume={54},
  number={4},
  pages={816--820},
  year={2009},
  publisher={IEEE}
}

@book{1967stochastic,
  title={Stochastic Stability and Control},
  author = {Harold J. Kushner},
  year={1967},
  publisher={Academic Press}
}

@INPROCEEDINGS{cosner2023robust,
  title={Robust Safety under Stochastic Uncertainty with Discrete-Time Control Barrier Functions},
  booktitle={Robotics: Science and Systems (RSS)},
  author={Cosner, Ryan K and Culbertson, Preston and Taylor, Andrew J and Ames, Aaron D},
  year={2023}
}

@inproceedings{ames2019control,
  title={Control barrier functions: Theory and applications},
  author={Ames, Aaron D and Coogan, Samuel and Egerstedt, Magnus and Notomista, Gennaro and Sreenath, Koushil and Tabuada, Paulo},
  booktitle={2019 18th European control conference (ECC)},
  pages={3420--3431},
  year={2019},
  organization={IEEE}
}

@article{dani2014observer,
  title={Observer design for stochastic nonlinear systems via contraction-based incremental stability},
  author={Dani, Ashwin P and Chung, Soon-Jo and Hutchinson, Seth},
  journal={IEEE Transactions on Automatic Control},
  volume={60},
  number={3},
  pages={700--714},
  year={2014},
  publisher={IEEE}
}

@inproceedings{XC-SS:22,
  title={Reachability Analysis for Cyber-Physical Systems: Are We There Yet?},
  author={Chen, Xin and Sankaranarayanan, Sriram},
  booktitle={NASA Formal Methods: 14th International Symposium, NFM 2022, Pasadena, CA, USA, May 24--27, 2022, Proceedings},
  pages={109-130},
  year={2022},
  organization={Springer}
}

@article{chen2025concentration,
  title={Concentration of contractive stochastic approximation: Additive and multiplicative noise},
  author={Chen, Zaiwei and Maguluri, Siva Theja and Zubeldia, Martin},
  journal={The Annals of Applied Probability},
  volume={35},
  number={2},
  pages={1298--1352},
  year={2025},
  publisher={Institute of Mathematical Statistics}
}

@book{sarkka2019applied,
  title={Applied stochastic differential equations},
  author={S{\"a}rkk{\"a}, Simo and Solin, Arno},
  volume={10},
  year={2019},
  publisher={Cambridge University Press}
}

@inproceedings{jouffroy2005some,
  title={Some ancestors of contraction analysis},
  booktitle={Proceedings of the 44th IEEE Conference on Decision and Control},
  author={Jérôme Jouffroy},
  pages={5450--5455},
  year={2005},
  organization={IEEE}
}

@inproceedings{zamani2013controller,
  title={Controller synthesis for incremental stability: Application to symbolic controller synthesis},
  author={Zamani, Majid and van de Wouw, Nathan},
  booktitle={2013 European Control Conference (ECC)},
  pages={2198--2203},
  year={2013},
  organization={IEEE}
}

@article{lavaei2022automated,
  title={Automated verification and synthesis of stochastic hybrid systems: A survey},
  author={Lavaei, Abolfazl and Soudjani, Sadegh and Abate, Alessandro and Zamani, Majid},
  journal={Automatica},
  volume={146},
  pages={110617},
  year={2022},
  publisher={Elsevier}
}

@article{tsukamoto2020robust,
  title={Robust controller design for stochastic nonlinear systems via convex optimization},
  author={Tsukamoto, Hiroyasu and Chung, Soon-Jo},
  journal={IEEE Transactions on Automatic Control},
  volume={66},
  number={10},
  pages={4731--4746},
  year={2020},
  publisher={IEEE}
}

@article{liu2024safety,
  title={Safety verification of stochastic systems: A set-erosion approach},
  author={Liu, Zishun and Jafarpour, Saber and Chen, Yongxin},
  journal={IEEE Control Systems Letters},
  volume={8},
  pages={2859--2864},
  year={2024},
  publisher={IEEE}
}

@inproceedings{wei2025conformal,
  title={Conformal contraction for robust nonlinear control with distribution-free uncertainty quantification},
  author={Wei, Sihang and Ornik, Melkior and Tsukamoto, Hiroyasu},
  booktitle={2025 IEEE 64th Conference on Decision and Control (CDC)},
  pages={3502--3507},
  year={2025},
  organization={IEEE}
}

@inproceedings{sun2021learning,
  title={Learning certified control using contraction metric},
  author={Sun, Dawei and Jha, Susmit and Fan, Chuchu},
  booktitle={conference on Robot Learning},
  pages={1519--1539},
  year={2021},
  organization={PMLR}
}

@article{akella2025risk,
  title={Risk-aware robotics: Tail risk measures in planning, control, and verification [focus on education]},
  author={Akella, Prithvi and Dixit, Anushri and Ahmadi, Mohamadreza and Lindemann, Lars and Chapman, Margaret P and Pappas, George J and Ames, Aaron D and Burdick, Joel W},
  journal={IEEE Control Systems},
  volume={45},
  number={4},
  pages={46--78},
  year={2025},
  publisher={IEEE}
}
\end{document}